\begin{document}
\newtheorem{theorem}{Theorem}[section]
\newtheorem{lemma}[theorem]{Lemma}
\newtheorem{corollary}[theorem]{Corollary}
\newtheorem{prop}[theorem]{Proposition}
\newtheorem{definition}[theorem]{Definition}
\newtheorem{remark}[theorem]{Remark}


 \def\ad#1{\begin{aligned}#1\end{aligned}}  \def\b#1{{\bf #1}} \def\hb#1{\hat{\bf #1}}
\def\a#1{\begin{align*}#1\end{align*}} \def\an#1{\begin{align}#1\end{align}}
\def\e#1{\begin{equation}#1\end{equation}} \def\t#1{\hbox{\rm{#1}}}
\def\dt#1{\left|\begin{matrix}#1\end{matrix}\right|}
\def\p#1{\begin{pmatrix}#1\end{pmatrix}} \def\c{\operatorname{curl}}
 \numberwithin{equation}{section} \def\P{\Pi_h^\nabla}

\def\bg#1{{\pmb #1}} 

\title  [Virtual elements]
   {A family of stabilizer-free virtual elements on triangular meshes}

\author {Xuejun Xu}
\address{School of Mathematical Science, \ Tongji University, \ Shanghai, \ 200092, \ China}
	\address{Institute of Computational Mathematics, AMSS, Chinese Academy of Sciences, Beijing, 100190, China}
\email{xxj@lsec.cc.ac.cn}

\author { Shangyou Zhang }
\address{Department of Mathematical
            Sciences, University
     of Delaware, Newark, DE 19716, USA. }
\email{szhang@udel.edu }

\date{}

\begin{abstract} \ \ 
A family of stabilizer-free $P_k$ virtual elements  are constructed on triangular meshes.
When choosing an accurate and proper interpolation, the stabilizer of the virtual 
  elements can be dropped while the quasi-optimality is kept. 
The interpolating space here is the space of continuous $P_k$ polynomials on the
   Hsieh-Clough-Tocher macro-triangle, where the macro-triangle is defined by connecting
  three vertices of a triangle with its barycenter.
We show that such an interpolation preserves $P_k$ polynomials locally and enforces the
  coercivity of the resulting bilinear form.
Consequently the stabilizer-free virtual element solutions converge at the optimal order.
Numerical tests are provided to confirm the theory and to be compared with existing 
   virtual elements.
\vskip .3cm

\noindent{Key words.} \  virtual element, stabilizer free, elliptic equation,
      Hsieh-Clough-Tocher macro-triangle, triangular mesh.

\noindent{AMS subject classifications.} \ 65N15, 65N30

\end{abstract}

\maketitle \baselineskip=16pt

\section{Introduction}
In this work, we construct a family of stabilizer-free $P_k$ virtual elements
(\cite{Beirao,Beirao16, Cao-Chen, Cao-Chen2,  Chen1, Chen2, Chen3,
  Feng-Huang,Feng-Huang2, Huang1, Huang2, Huang3})    on triangular meshes.   

For solving 
 the following model equation, 
\an{ \label{p-e} \ad{ -\Delta  u & = f  && \t{in } \ \Omega, \\
            u&=0 && \t{on } \ \partial \Omega, } }
where $\Omega\subset \mathbb{R}^2$ is a bounded polygonal domain and $f\in L^2(\Omega)$,
the weak form reads:  Find $u\in H^1_0(\Omega)$ such that
\an{ \label{w-e} \ad{ (\nabla u, \nabla v) & = ( f,v)  && \forall v \in  H^1_0(\Omega),} }
where $(\cdot, \cdot)$ denotes the $L^2$ inner product on $\Omega$ 
    and we have $|v|_1^2=(\nabla v,\nabla v)$.

Let $\mathcal{T}_h=\{ K \}$ be a quasi-uniform triangular mesh on $\Omega$ with $h$ as the
  maximum size of triangles $K$.  Let $\mathcal{E}_h$ be the set of edges $e$ in $\mathcal{T}_h$.
For $k\ge 1$, the virtual element space is defined as
\an{ \label{t-V-h} \tilde 
    V_h=\{ v\in H^1_0(\Omega) : \tilde v|_{\partial K}\in \mathbb{B}_k(\partial K),
   \Delta \tilde v|_K \in P_{k-2}(K) \}, }
where $P_{-1}=\{0\}$ and $ \mathbb{B}_k(\partial K)=\{ v\in C^0(\partial K) : v|_e\in P_k(e)
   \ \forall e\subset K \}$.
In computation, the standard 
   interpolated virtual finite element space on $\mathcal{T}_h$ is defined by
\an{ \label{V-h} V_h = \{ v_h=\Pi_h^\nabla \tilde v \ : \ v_h|_K \in \mathbb{V}_k(K), 
  \ K\in\mathcal{T}_h; \
   \tilde v\in \tilde V_h \}, }
where $\mathbb{V}_k(K)=P_k(K)$ for the standard virtual elements (and to be defined below
   in \eqref{V-k} for the new method),
   and $v_h=\Pi_h^\nabla \tilde v$ satisfies, for all
  $w_h\in \mathbb{V}_k(K)$,
\an{\label{Pi}  \langle v_h-\tilde v, w_h\rangle_{\partial K}
   =0, \ \t{ and } \ (\nabla(v_h-\tilde v), \nabla w_h)_K=0. }
The stabilizer-free virtual element equation reads:  Find $u_h=\P \tilde u\in V_h$ such that
\an{\label{f-e} (\nabla u_h,\nabla v_h)_h = (f,v_h) 
     \quad \forall \tilde v\in \tilde V_h, \ v_h=\P \tilde v, }
where $(\nabla u_h, \nabla v_h)_h=\sum_{K\in \mathcal{T}_h} (\nabla u_h, \nabla v_h)_K$.  
But the dimension of $V_h$ is less than that of $\tilde V_h$ unless $k=1$ and on
   triangular meshes.  Thus the bilinear form in \eqref{f-e} is not coercive and 
   the equation does not have a unique solution.
A discrete stabilizer must be added to the equation \eqref{f-e}.

In order to delete the stabilizer, 
  \cite{Berrone-0} proposed to replace $\mathbb{V}_k(K)=P_k(K)$ by
   $\mathbb{V}_k(K)=P_{k+l}(K)$ in the virtual element space \eqref{V-h} for
  the case $k=1$ on polygonal meshes, where $l$ depends on the maximum number of
   edges.
 Further numerical tests and comparisons are given in \cite{Berrone-1}.
Another stabilization-free method for $k=1$ is proposed in \cite{Berrone} that
   $\mathbb{V}_k(K)=P_k(K)\cup H_l(K)$, where $H_l(K)$ is the set of 2D harmonic
   polynomials of degree $l$ or less, and $l$ depends on the maximum number of
   edges.
This is an excellent idea because the $H_l$ polynomials may enforce corerciveness while
  not destroying the gradient approximation, as they have vanishing Laplacian.
The same idea has been implemented in some other harmonic finite elements \cite{Al-Taweel,
 Sorokina1,Sorokina2}. 
But the method \cite{Berrone} 
  is shown not working for $k>3$ numerically in this paper.
Another stabilizer-free virtual element method  is proposed in \cite{Chen-H} where,
  instead of $H^1$ interpolating the virtual element functions,  
  the weak gradient (the name is used in weak Galerkin methods, and the macro-RT and
   macro-BDM are used to define the weak gradient in \cite{Ye-Z20a,Ye-Z21c,Ye-Z23c}) is defined 
   via integration by parts and macro-mixed finite elements on polygons and polyhedra.

We propose to define the interpolation space $\mathbb{V}_k(K)$ in \eqref{V-h} as
\an{ \label{V-k} \mathbb{V}_k(K)=\{ v_h \in C(K) \ : \ 
      v_h|_{K_i}\in P_k(K_i), \   K=\cup_{i=1}^3 K_i \}, }
where $K$ is split in to three triangles by connecting its barycenter with three
   vertices, cf. Figure \ref{f-h}.   We call $K$ a Hsieh-Clough-Tocher macro-triangle \cite{Clough, Sorokina2,
 Xu-Zhang,  ZhangMG, Zhang3D}.

\begin{figure}[ht] \centering 
\begin{picture}(120,90)(0,0)
 \put(1,50){$K:$} \put(40,50){$K_2$} \put(65,50){$K_1$}  \put(55,10){$K_3$}
  \put(0,0){\line(2,1){60}}\put(120,0){\line(-2,1){60}}\put(60,30){\line(0,1){60}}
  \put(0,0){\line(2,3){60}}\put(120,0){\line(-2,3){60}}\put(0,0){\line(1,0){120}}
\end{picture} \caption{A Hsieh-Clough-Tocher macro-triangle $K=\cup_{i=1}^3 K_i$}
  \label{f-h}
\end{figure}
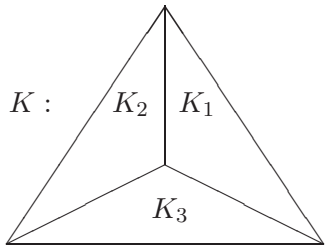

The interpolation operator $\Pi^\nabla_h$ in \eqref{Pi} is naturally
  defined by $v_h=\Pi^\nabla_h \tilde v\in \mathbb{V}_k(K)$ of \eqref{V-k} satisfying 
\an{ \label{l-e} \ad{ v_h &= \tilde v && \t{on } \ \partial K, \\
                    (\nabla v_h, \nabla w_h)_K &= (\nabla \tilde v, \nabla w_h)_K 
                      && \forall w_h \in H^1_0(K) \cap \mathbb{V}_k(K). } }

We note that a different interpolation space only changes the
   numerical quadrature formula for computing $(\nabla u_h, \nabla v_h)=
    (\nabla \Pi_h^\nabla \tilde u, \nabla \Pi_h^\nabla \tilde v)$ in the
   virtual elements equation.
An accurate calculation of local interpolation does not increase the computational cost
  once the stiffness matrix is generated.
One may see no advantage of this stabilizer-free virtual element over the Lagrange finite
   element.
But the virtual elements are mainly for polygonal and polyhedral meshes.
In \cite{Lin-Mu}, this stabilization technique is applied to 2D polygons and 
   3D polyhedra.
We separate the case of triangles because it shows the idea clearly while the polygons
  and polyhedra have natural triangular and tetrahedral subdivisions, cf. \cite{Lin-Mu}.

Eliminating the stabilizer would not only reduce computational cost, but also likely to
  improve the condition number of the resulting system.
In the numerical test, we show how the condition number of the standard virtual element
   is improved by three methods.
But the improved condition number is still worse than that of this stabilizer-free
   virtual element.

Eliminating the stabilizer would likely  utilize fully every degree of
  freedom in the discrete approximation.
Thus it often leads to discoveries of superconvergence.
In \cite{Lin-Xu}, it is shown that only this stabilizer-free $P_1$ virtual element
   converges three orders above the optimal order in $H^1$-norm, and
   two orders above the optimal order in $L^2$-norm and $L^\infty$-norm,
   when solving the Poisson equation on honeycomb meshes.

The stabilizer is eliminated first in the weak Galerkin finite element method  
  \cite{Al-Taweel-Wang,Feng-Zhang, Gao-Z,Mu1,Wang-Z20,
      Wang-Z21,Ye-Z20a,  Ye-Z20b,  Ye-Z21c},
  then in the $H(\t{div})$ finite element method \cite{Mu21,Ye-Z21a},
    in the $C^0$ or $C^{-1}$
     finite element methods for the biharmonic equation \cite{Ye-Z22b}
  and in the discontinuous Galerkin finite element method \cite{Feng-Z21,Mu23}.
It leads to two-order superconvergent WG finite elements \cite{Al-Taweel-Z21,
    Wang-Z23,Wang-Z23a, Ye-Z23e} and
  two-order superconvergent DG finite elements \cite{Ye-Z22d,Ye-Z23c} 
   for second order elliptic   equations, 
  one or two-order superconvergent WG finite elements for the Stokes equations 
\cite{Mu21b,Ye-Z21e}, 
  four-order superconvergent WG finite elements \cite{Ye-Z23d}
   and four-order  superconvergent DG finite elements \cite{Ye-Z22d,Ye-Z23e}
   for the biharmonic equation.
That is, for an example, a $P_3$ discontinuous finite element method, with the
  stabilizer-free technique, produces the same order approximate solution as a
  $C^1$-$P_7$ finite element method does, in solving a 2D biharmonic equation.

In this paper, we show
  that with the new interpolation \eqref{l-e}, the stabilizer-free virtual element equation 
   \eqref{f-e} has a unique and quasi-optimal solution, on triangular meshes.
Numerical tests on the new stabilizer-free virtual elements are performed.
Numerical comparisons are presented, with the other stabilizer-free virtual elements
   and with the standard virtual elements.

                \vskip .7cm
 
\section{The well-posedness} 

We show in this section that the stabilizer-free virtual element equation has a unique solution.

\begin{lemma} 
The interpolation operator
 $\P$ is well defined in \eqref{l-e} and it preserves $P_k$ polynomials,
\an{\label{p-p} \P \tilde v = \tilde v\quad \ \t{if } \ \tilde v\in P_k(K). }
\end{lemma}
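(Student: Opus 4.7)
The plan has two steps. First I will verify that \eqref{l-e} determines $v_h$ uniquely by reformulating it as a discrete Dirichlet problem on the Hsieh--Clough--Tocher macro-triangle, and then derive the polynomial preservation property \eqref{p-p} as an immediate corollary of uniqueness.

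For the well-posedness, the key observation is that the boundary trace map $\mathbb{V}_k(K)\to \mathbb{B}_k(\partial K)$ is surjective: any $v_h\in \mathbb{V}_k(K)$ is continuous on $K$ and restricts to a $P_k$ polynomial on each outer edge (which lies entirely inside a single sub-triangle $K_i$), while conversely a standard $C^0$ Lagrange nodal basis for $\mathbb{V}_k(K)$ on the three sub-triangles lets one prescribe boundary values in $\mathbb{B}_k(\partial K)$ independently of the interior nodal values. Hence there exists a lift $v_0\in \mathbb{V}_k(K)$ with $v_0=\tilde v$ on $\partial K$, and the first equation of \eqref{l-e} is equivalent to $v_h-v_0\in H^1_0(K)\cap \mathbb{V}_k(K)$. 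The second equation then fixes the interior correction $w:=v_h-v_0$ via the finite-dimensional Galerkin system
\[
(\nabla w,\nabla w_h)_K=(\nabla \tilde v-\nabla v_0,\nabla w_h)_K\quad\forall w_h\in H^1_0(K)\cap \mathbb{V}_k(K).
\]
Since $\mathbb{V}_k(K)\subset H^1(K)$ (the internal-edge continuity is built into the definition) and Poincar\'e makes $(\nabla\cdot,\nabla\cdot)_K$ an inner product on $H^1_0(K)$, the bilinear form is coercive on this subspace, so $w$ exists and is unique by Lax--Milgram. Note that the right-hand side is well-defined on the virtual space because $\tilde v\in \tilde V_h$ has $\Delta \tilde v\in P_{k-2}(K)$, so integration by parts gives $(\nabla \tilde v,\nabla w_h)_K=-(\Delta \tilde v,w_h)_K$, a quantity computable from the prescribed data.

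For polynomial preservation, suppose $\tilde v\in P_k(K)$. Since every polynomial on $K$ is a fortiori a continuous piecewise $P_k$ function on the three sub-triangles, $\tilde v\in \mathbb{V}_k(K)$. Then the candidate $v_h:=\tilde v$ satisfies both conditions in \eqref{l-e} trivially, and by the uniqueness just established $\P\tilde v=\tilde v$.

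The only potentially delicate point is the surjectivity of the boundary trace map used to construct the lift $v_0$; once that is in hand, the rest of the argument is the standard Lax--Milgram bookkeeping. I expect no further obstacle.
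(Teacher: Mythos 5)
Your proposal is correct and follows essentially the same route as the paper: both establish well-posedness of the square system \eqref{l-e} from the coercivity of $(\nabla\cdot,\nabla\cdot)_K$ on $H^1_0(K)\cap\mathbb{V}_k(K)$ (you via a boundary lift and Lax--Milgram, the paper via ``uniqueness implies existence'' after testing with $v_h$ itself), and both deduce \eqref{p-p} from uniqueness using $P_k(K)\subset\mathbb{V}_k(K)$. The surjectivity of the boundary trace map that you flag is exactly the point the paper also invokes, justified by the $C^0$-$P_k$ Lagrange nodal basis on the macro-triangle.
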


\begin{proof} Because $\tilde v|_{\partial K}\in \mathbb{B}_k(\partial K)$, 
  $v_h$ can assume the boundary
  condition $v_h=\tilde v$ exactly on $\partial K$. The linear system of equations in \eqref{l-e}
 is a finite dimensional square system. The existence is implied by the uniqueness. 
 To show the uniqueness, we let $\tilde v=0$ in \eqref{l-e}.  
  Letting $w_h=v_h$ in \eqref{l-e}, we get
\a{ \nabla v_h =\b 0 \quad \t{ on } \ K. }
Thus $v_h=c$ is a constant on $K$.  As $v_h$ is continuous on edges,  $v_h=c$ is
a global constant on the whole domain.  By the boundary condition, we get $0=\tilde v|_
   {\partial \Omega}=v_h|_{\partial \Omega}=c$.
   Hence $v_h=0$ and \eqref{l-e} has a unique solution.  

If $\tilde v\in P_k(K)\subset \mathbb{V}_k(K)$, defined in \eqref{V-h}, then the solution   
   of \eqref{l-e} says, letting $w_h=v_h-\tilde v$, 
\a{ \nabla (v_h-\tilde v)=\b 0. }
Thus $v_h-\tilde v$ is a global constant which must be zero as it vanishes at all $\partial K$.
  \eqref{p-p} is proved. 
\end{proof}
                \vskip .7cm

\begin{lemma} 
The stabilizer-free virtual element equation \eqref{f-e} has a unique solution,
where the interpolation $\P$ is defined in \eqref{l-e}.
\end{lemma}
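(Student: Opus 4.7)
The plan is to reduce the problem to a square finite-dimensional linear system and verify that the homogeneous problem has only the trivial solution. Because $\Pi_h^\nabla$ is linear, the test equation in \eqref{f-e} depends on $\tilde v$ only through $v_h = \Pi_h^\nabla \tilde v$, so the effective test space is $V_h$ itself and the number of independent equations matches the number of unknowns. By linear algebra, uniqueness then implies existence.

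For uniqueness, I would set $f = 0$ and pick $v_h = u_h \in V_h$ in \eqref{f-e}, obtaining $\sum_{K \in \mathcal{T}_h}(\nabla u_h,\nabla u_h)_K = 0$, so $\nabla u_h$ vanishes on every sub-triangle $K_i$ of every macro-triangle $K$. Since by \eqref{V-k} $u_h|_K \in \mathbb{V}_k(K)$ is continuous on the whole macro-triangle $K$, vanishing gradient on each $K_i$ forces $u_h$ to equal a single constant $c_K$ on each $K$.

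Next, I would invoke \eqref{l-e}, which gives $u_h|_{\partial K} = \tilde u|_{\partial K}$ for the corresponding $\tilde u \in \tilde V_h$. Because $\tilde u \in H^1_0(\Omega)$ is continuous across element edges and vanishes on $\partial \Omega$, the same holds for $u_h$. Therefore the constants $c_K$ must agree across every shared edge and must equal zero on any $K$ meeting $\partial \Omega$. Propagating this equality through the connected mesh forces $c_K = 0$ for every $K$, hence $u_h \equiv 0$.

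The main obstacle is the bookkeeping subtlety in the first step: one must check that the test space in \eqref{f-e} can indeed be identified with $V_h$ so that the equation count agrees with $\dim V_h$; this relies on the linearity of $\Pi_h^\nabla$ and on the fact that any two pre-images $\tilde v$ of the same $v_h$ contribute the same equation. Once this is established, the argument becomes a global echo of the local uniqueness proof in the preceding lemma, with global continuity of $u_h$ inherited from $\tilde u \in H^1_0(\Omega)$ via the trace identity $u_h|_{\partial K} = \tilde u|_{\partial K}$.
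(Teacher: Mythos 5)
There is a genuine gap: your argument stops at $u_h=\Pi_h^\nabla\tilde u=0$, but the unknown of the square linear system \eqref{f-e} is $\tilde u\in\tilde V_h$, not $u_h\in V_h$ --- the equations are indexed by a basis of $\tilde V_h$, and it is on $\tilde V_h$ that ``uniqueness implies existence'' must be invoked. So uniqueness means $\tilde u=0$, and the passage from $\Pi_h^\nabla\tilde u=0$ to $\tilde u=0$ is precisely the content of the lemma: it amounts to injectivity of $\Pi_h^\nabla$ on $\tilde V_h$, equivalently coercivity of $(\nabla\Pi_h^\nabla\tilde u,\nabla\Pi_h^\nabla\tilde v)_h$ on $\tilde V_h$. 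This fails for the standard choice $\mathbb{V}_k(K)=P_k(K)$ when $k\ge 2$ (which is exactly why a stabilizer is normally required; see the discussion following \eqref{f-e}). Your proof never uses the Hsieh--Clough--Tocher structure of $\mathbb{V}_k(K)$ in \eqref{V-k} and would apply verbatim to the standard virtual element, where the conclusion is false.

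The missing piece is the paper's local argument. From $\Pi_h^\nabla\tilde u=0$ and the first line of \eqref{l-e} one gets $\tilde u=0$ on $\partial K$; the second line of \eqref{l-e} gives $(\nabla\tilde u,\nabla w_h)_K=(\nabla\Pi_h^\nabla\tilde u,\nabla w_h)_K=0$ for all $w_h\in H^1_0(K)\cap\mathbb{V}_k(K)$, hence, after integration by parts, $(-\Delta\tilde u,w_h)_K=0$. Since $p_{k-2}:=-\Delta\tilde u\in P_{k-2}(K)$ by the definition \eqref{t-V-h}, the choice $w_h=p_{k-2}\,\phi_{\mathbf{x}_0}$, where $\phi_{\mathbf{x}_0}$ is the continuous piecewise-$P_1$ hat function at the barycenter of the HCT split, is admissible: it has degree at most $k-1$ on each $K_i$ and vanishes on $\partial K$, so it lies in $H^1_0(K)\cap\mathbb{V}_k(K)$. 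This yields $\int_K p_{k-2}^2\,\phi_{\mathbf{x}_0}\,dx=0$ and, since $\phi_{\mathbf{x}_0}>0$ in the interior of $K$, $p_{k-2}=0$. Then $\tilde u$ is harmonic in $K$ with zero boundary trace, so $\tilde u=0$. No interior bubble of degree $\le k$ that multiplies all of $P_{k-2}(K)$ into $H^1_0(K)$ exists inside $P_k(K)$ itself; supplying one is exactly what the macro-triangle space buys, and it is the step your proposal omits.
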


\begin{proof} As both $\tilde u, \tilde v \in \tilde V_h$,
  \eqref{f-e} is a finite square  system of linear equations. The uniqueness of solution 
  implies the existence. 
 To show the uniqueness, we let $f=0$ and $\tilde v=\tilde u$ in \eqref{f-e}. 
   It follows that
\a{ |\P \tilde u|_{1,h} =0. }
Thus $\P \tilde u=c$ is  constant on each $K$.  But $\P \tilde u$ is continuous on the whole
 domain.  By the boundary condition, we get $0=\P \tilde u|_
   {\partial \Omega}=c$.  That is,
\an{\label{p-u-0}  \P \tilde u=0.  }
On one triangle $K=\cup_{i=1}^3 K_i$, 
\an{\label{u-p-k} \tilde u=\P\tilde u=0 \quad \ \t{on } \ \partial K. }
Inside the triangle, by \eqref{l-e}, \eqref{p-u-0}, \eqref{u-p-k} and 
   integration by parts, we have
\an{\label{l-e1} (-\Delta \tilde u, w_h) =
     (\nabla \tilde u, \nabla w_h)=0 \quad \forall w_h\in H^1_0(K)\cap \mathbb{V}_k(K).  }
 By the space $\tilde V_h$ definition \eqref{t-V-h}, we denote
\an{\label{p-k-2} p_{k-2} = -\Delta \tilde u \in P_{k-2}(K).  }
 Let the $w_h$ in \eqref{l-e1} be
\an{\label{w-h} w_h= p_{k-2} \phi_{\b x_0}(\b x) 
   \in H^1_0(K)\cap \mathbb{V}_k(K),  }
  where $\phi_{\b x_0}(\b x)$ is the $P_1$ Lagrange basis function at
   node $\b x_0$, cf. Figure \ref{lambda}.
  That is, $\phi_{\b x_0}(\b x)|_{K_i}= \lambda_{i,0}$ is the barycentric coordinate at 
 $x_0$ on triangle $K_i$,
   i.e.,  a linear function which assumes value 1 at $\b x_0$ and vanishing on the 
   line $e_i$, cf. Figure \ref{lambda}.

\begin{figure}[ht] \centering 
\begin{picture}(120,90)(0,0)
 \put(1,50){$K:$} \put(40,50){$K_2$} \put(65,50){$K_1$}  \put(55,10){$K_3$}
 \put(62,30){$\b x_0$} \put(16,40){$e_2$} \put(96,40){$e_1$}  \put(35,2){$e_3$}
  \put(0,0){\line(2,1){60}}\put(120,0){\line(-2,1){60}}\put(60,30){\line(0,1){60}}
  \put(0,0){\line(2,3){60}}\put(120,0){\line(-2,3){60}}\put(0,0){\line(1,0){120}}
\end{picture} \caption{A Hsieh-Clough-Tocher macro-triangle $K=\cup_{i=1}^3 K_i$}
  \label{lambda}
\end{figure}
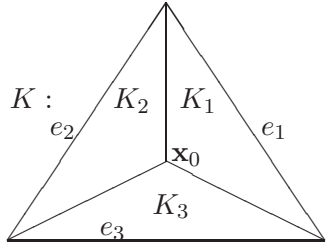

With the $w_k$ in \eqref{w-h}, we get from \eqref{l-e1} and \eqref{p-k-2} that
\a{ \int_K p_{k-2}^2 \phi_{\b x_0}(\b x) d \b x =0.  }
As $\phi_{\b x_0}(\b x)>0$ inside $K$,  it follows that
\a{ p_{k-2}^2 =0 \ \t{ and } \ p_{k-2}  =0 \ \t{ on } \ K. }
By \eqref{u-p-k} and \eqref{p-k-2}, $\Delta \tilde  u=0$ in $K$ and $\tilde u=0$ on 
   $\partial K$.  Thus, by the unique solution of the Laplace equation,  $\tilde u=0$.
  The lemma is proved. 
\end{proof}
                \vskip .7cm
   
\section{Convergence}
           
We prove the optimal order convergence of the stabilizer-free virtual element solutions in this
  section.

\begin{theorem}  Let $ u\in H^{k+1}\cap H^1_0(\Omega)$ be the exact solution of \eqref{w-e}.
 Let $u_h$ be the stabilizer-free virtual element solution of \eqref{f-e}.   It holds that
  \an{ \label{h-1}  |  u- u_h |_{1}    \le Ch^{k} | u|_{k+1}.  } 
\end{theorem}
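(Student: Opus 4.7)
The crucial observation is that the new interpolation \eqref{l-e} makes the scheme an ordinary conforming Galerkin method on $H^1_0(\Omega)$, so the analysis reduces to C\'ea's lemma plus an $H^1$-approximation estimate for $\P$.

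First I will check that $V_h\subset H^1_0(\Omega)$. On each $K$ an element $v_h=\P\tilde v$ is continuous and piecewise $P_k$ on the HCT sub-triangles, hence $v_h\in H^1(K)$; because $v_h=\tilde v$ on $\partial K$ and $\tilde v\in H^1_0(\Omega)$, $v_h$ matches across element interfaces and vanishes on $\partial\Omega$. Therefore the broken bilinear form in \eqref{f-e} coincides with $(\nabla u_h,\nabla v_h)$ and Galerkin orthogonality with \eqref{w-e} yields
\[
|u-u_h|_1\le|u-v_h|_1\qquad\forall\,v_h\in V_h.
\]

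Next I will bound $\inf_{v_h\in V_h}|u-v_h|_1$. Let $u_I\in\tilde V_h$ be the standard virtual-element interpolant of $u$, for which $|u-u_I|_1\le Ch^k|u|_{k+1}$, and let $u_\pi|_K\in P_k(K)$ be the averaged Taylor polynomial with $|u-u_\pi|_{1,K}\le Ch^k|u|_{k+1,K}$. Taking $v_h=\P u_I\in V_h$ and using the polynomial preservation \eqref{p-p} (so that $\P u_\pi=u_\pi$ on each $K$),
\[
|u-\P u_I|_{1,K}\le|u-u_\pi|_{1,K}+|\P(u_I-u_\pi)|_{1,K}.
\]
The proof will therefore reduce to the local $H^1$-stability
\[
|\P\tilde w|_{1,K}\le C\,|\tilde w|_{1,K}\qquad\forall\,\tilde w\in\tilde V_h|_K,
\]
since, applied to $\tilde w=u_I-u_\pi$ and combined with the two approximation bounds above, it gives $|u-\P u_I|_{1,K}\le Ch^k|u|_{k+1,K}$; summation over $K$ and the C\'ea bound then yield \eqref{h-1}.

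Finally, I will establish the stability via the variational characterisation implicit in \eqref{l-e}: $\P\tilde w$ minimises $|v-\tilde w|_{1,K}$ over all $v\in\mathbb{V}_k(K)$ with $v|_{\partial K}=\tilde w|_{\partial K}$. Testing against a concrete discrete lifting $L\in\mathbb{V}_k(K)$ of $\tilde w|_{\partial K}$---for instance, match the boundary degrees of freedom of $\tilde w$ and set the interior HCT degrees of freedom (barycenter value, spoke moments, and sub-triangle interior moments) to zero---gives $|\P\tilde w-\tilde w|_{1,K}\le|L-\tilde w|_{1,K}$. Norm equivalence on the finite-dimensional space $\mathbb{B}_k(\partial K)=\mathbb{V}_k(K)|_{\partial K}$, the trace theorem, and Poincar\'e applied after subtracting the local mean (both sides of the stability estimate are invariant under constants) then give $|L|_{1,K}\le C|\tilde w|_{1,K}$. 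The principal technical point is exactly this stability estimate: because $\mathbb{V}_k(K)$ is a macro-element space on the HCT refinement rather than the plain $P_k(K)$, the interior degrees of freedom must be treated in a scale-invariant way, via a reference-macro-triangle scaling argument, so that the lifting constant is independent of $h$ and of the shape-regular element $K$. Everything else in the argument is then routine.
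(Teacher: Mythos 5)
Your proposal is correct in outline and shares the paper's skeleton: you both observe that the HCT interpolation makes $V_h$ a genuinely conforming subspace of $H^1_0(\Omega)$, so Galerkin orthogonality and C\'ea's lemma reduce everything to a best-approximation bound. Where you diverge is in producing the competitor. The paper simply takes $v_h=I_hu$, the Scott--Zhang interpolant: a globally continuous piecewise $P_k$ function lies in $\tilde V_h$ and, by the polynomial-preservation Lemma \eqref{p-p}, is fixed by $\P$, hence already belongs to $V_h$; the standard Scott--Zhang estimate then finishes the proof in two lines, with no stability analysis of $\P$ whatsoever. You instead take $v_h=\P u_I$ with $u_I$ the virtual-element interpolant, which forces you to prove the local $H^1$-stability $|\P\tilde w|_{1,K}\le C|\tilde w|_{1,K}$ on the (non-polynomial) virtual space via a discrete lifting, trace/Poincar\'e and reference-element scaling --- a workable but substantially heavier argument, and the one place where care is genuinely needed (the lifting must be applied to $\tilde w$ minus its mean, as you note, since the zero-interior-DOF lifting of a constant trace is not the constant function). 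What your route buys is generality: it does not presuppose that a conforming piecewise-$P_k$ interpolant of $u$ lands in $V_h$, so it is the template that survives on polygonal or polyhedral elements where Scott--Zhang on $\mathcal T_h$ is not available; what the paper's route buys is brevity, by exploiting the triangular-mesh setting where Lemma \eqref{p-p} hands you an admissible competitor for free.
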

                        
\begin{proof} Since $w_h\in V_h \subset H^1_0(\Omega) $,  we subtract \eqref{f-e} from
   \eqref{w-e} to get
\a{  (\nabla (u-u_h), \nabla  w_h)_h =0\quad \forall w_h\in   V_{h }. } 
Applying the Schwarz inequality,  it follows that
\a{    |   u- u_h|_{1}^2  
     & =  (\nabla(  u-  u_h), \nabla(  u- I_h u))\\ 
     &\le |   u- u_h|_{1} |   u- I_h u |_{1} \le Ch^{k} |u|_{k+1}
 |   u- u_h|_{1} ,} 
      where $ I_h  u$ is the Scott-Zhang interpolation on
  quasi-uniform triangulation $\mathcal{T}_h$ \cite{Scott-Zhang}. 
The proof is complete.
\end{proof}

To get the optimal order $L^2$ error bound,  we need a full regularity of the dual
   equation that the solution of the equation, 
\a{ -\Delta w &= g \quad \t{ in } \ \Omega, \\
            w &=0  \quad \t{ on } \ \partial \Omega, }
satisfies
\an{\label{r} |w|_2 \le C \|g\|_0.  }

\begin{theorem}  Let $ u\in H^{k+1}\cap H^1_0(\Omega)$ be the exact solution of \eqref{w-e}.
 Let $u_h$ be the stabilizer-free virtual element solution of \eqref{f-e}.  Assuming
   \eqref{r}, it holds that
  \a{   \|  u- u_h \|_{0}    \le Ch^{k+1} | u|_{k+1}.  } 
\end{theorem}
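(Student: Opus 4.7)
The plan is to adapt the classical Aubin--Nitsche duality argument to the stabilizer-free setting. Set $e = u - u_h$ and observe that $e \in H^1_0(\Omega)$: this is because $V_h \subset H^1_0(\Omega)$, which in turn follows from \eqref{l-e}, where $v_h = \tilde v$ on each $\partial K$ forces $v_h$ to inherit both the global continuity across edges and the vanishing boundary values of $\tilde v \in \tilde V_h$. Let $w \in H^1_0(\Omega)$ solve the dual problem $-\Delta w = e$ in $\Omega$, so that by the regularity assumption \eqref{r} one has $|w|_2 \le C\|e\|_0$. Integration by parts, valid because $e$ vanishes on $\partial\Omega$, then gives the starting identity
\[
\|e\|_0^2 \;=\; (e,\,-\Delta w) \;=\; (\nabla e,\nabla w).
\]

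Next I would invoke Galerkin orthogonality. Since $V_h \subset H^1_0(\Omega)$, the broken inner product $(\cdot,\cdot)_h$ appearing in \eqref{f-e} coincides with the standard $L^2$ inner product of gradients on $V_h \times V_h$, so subtracting \eqref{f-e} from \eqref{w-e} tested against any $v_h \in V_h$ yields $(\nabla e,\nabla v_h)=0$. Applying this with $v_h = I_h w$, the Scott--Zhang interpolant employed already in the proof of Theorem~3.1, leads to
\[
\|e\|_0^2 \;=\; (\nabla e,\nabla(w - I_h w)) \;\le\; |e|_1 \, |w - I_h w|_1.
\]

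To finish I would combine two routine ingredients: the Scott--Zhang approximation bound $|w - I_h w|_1 \le Ch\,|w|_2 \le Ch\,\|e\|_0$, and the $H^1$ estimate $|e|_1 \le Ch^{k}|u|_{k+1}$ already proved in Theorem~3.1. Plugging both into the displayed identity and dividing by one power of $\|e\|_0$ delivers the stated bound $\|u-u_h\|_0 \le Ch^{k+1}|u|_{k+1}$.

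I do not anticipate a serious obstacle here; the only structural point that needs to be checked (and is needed equally in Theorem~3.1) is that $I_h w$ is truly an admissible test function in $V_h$. This rests on the chain $P_1(K) \subset P_k(K) \subset \mathbb{V}_k(K)$ together with the $P_k$-preservation \eqref{p-p}, which together guarantee that every globally continuous piecewise $P_1$ function, and in particular $I_h w$, lies in $V_h$. With that observation, the duality argument goes through exactly in its standard form.
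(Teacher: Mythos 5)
Your proposal is correct and follows essentially the same Aubin--Nitsche duality argument as the paper: both exploit Galerkin orthogonality $(\nabla(u-u_h),\nabla v_h)=0$ for $v_h\in V_h$, the $H^1$ bound of Theorem 3.1, and the regularity assumption \eqref{r}. The only (immaterial) difference is the comparison function inserted via orthogonality --- you subtract the Scott--Zhang interpolant $I_h w$, while the paper subtracts the discrete dual solution $w_h$ and then invokes the first-order $H^1$ estimate $|w-w_h|_1\le Ch|w|_2$; both choices yield the identical bound, and your check that $I_h w\in V_h$ via the $P_k$-preservation property \eqref{p-p} is exactly the point that also underlies the paper's proof of Theorem 3.1.
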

                        
\begin{proof} Let $w\in H^2(\Omega)\cap H^1_0(\Omega)$ be the dual solution,
  \an{ \label{d2}
    ( \nabla w, \nabla v) &=(u-u_h, v), 
       \ \forall v \in H^1_0(\Omega).}
Thus, by \eqref{d2}, \eqref{r} and \eqref{h-1},  we get
\a{ \|u-u_h\|_0^2 &=(\nabla w, \nabla (u-u_h) ) = 
(\nabla (w-w_h), \nabla (u-u_h) ) \\
  & \le C h |w|_2  h^{k} |u|_{k+1} \le C h^{k+1} |u|_{k+1}\|u-u_h\|_0, }
where $w_h$ is the virtual element solution to the equation \eqref{d2}.
We obtain the $L^2$ error bound. 
\end{proof}

                \vskip .7cm

\section{Numerical test}

We solve numerically the Poisson equation \eqref{p-e}
   on the domain $\Omega=(0,1)\times(0,1)$, where an exact solution is chosen as
\an{\label{s-1} u(x,y)=\sin (\pi x) \sin (\pi y).  }

     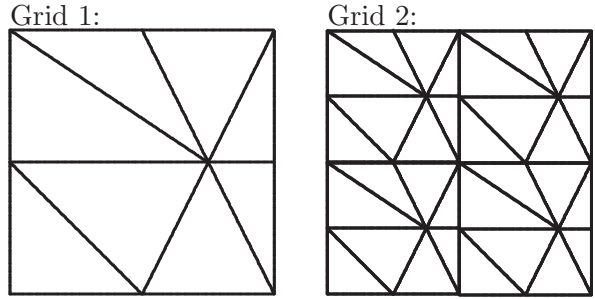
\begin{figure}[ht] \setlength\unitlength{1pt}\begin{center}
    \begin{picture}(220,110)(0,0)
     \def\mc{\begin{picture}(  100.,  100.)(  0.,  0.) 
     \def\la{\circle*{0.3}} 
     \multiput(   0.00,   0.00)(   0.250,   0.000){200}{\la}
     \multiput(  50.00,   0.00)(   0.250,   0.000){200}{\la}
     \multiput( 100.00,   0.00)(   0.000,   0.250){200}{\la}
     \multiput( 100.00,  50.00)(   0.000,   0.250){200}{\la}
     \multiput(   0.00, 100.00)(   0.250,   0.000){200}{\la}
     \multiput(  50.00, 100.00)(   0.250,   0.000){200}{\la}
     \multiput(   0.00,   0.00)(   0.000,   0.250){200}{\la}
     \multiput(   0.00,  50.00)(   0.000,   0.250){200}{\la}
     \multiput(  50.00,   0.00)(   0.112,   0.224){223}{\la}
     \multiput(  75.00,  50.00)(   0.250,   0.000){100}{\la}
     \multiput(  50.00, 100.00)(   0.112,  -0.224){223}{\la}
     \multiput(   0.00,  50.00)(   0.250,   0.000){300}{\la}
     \multiput(   0.00,  50.00)(   0.177,  -0.177){282}{\la}
     \multiput(  75.00,  50.00)(   0.112,  -0.224){223}{\la}
     \multiput(  75.00,  50.00)(   0.112,   0.224){223}{\la}
     \multiput(   0.00, 100.00)(   0.208,  -0.139){360}{\la} 
 \end{picture}}
      
\put(0,102){Grid 1:} \put(120,102){Grid 2:}
    \put(0,0){\mc}
      \put(120,0){\setlength\unitlength{0.5pt}\begin{picture}(90,90)(0,0)
    \multiput(0,0)(100,0){2}{\multiput(0,0)(0,100){2}{\mc}} \end{picture}}
    \end{picture}
 \caption{The first two levels of grids in the computation of
  Tables \ref{t-1}--\ref{t-4}. }\label{grid-8} 
    \end{center} \end{figure}

The computation is done first on a family of slightly irregular
    triangular meshes shown in Figure \ref{grid-8}.

In Table \ref{t-1}, we first list the errors and the computed order of convergence
  for the $P_1$ and $P_2$ stabilizer-free virtual elements \eqref{V-h} with the
   Hsieh-Clough-Tocher macro-triangle interpolation space $\mathbb{V}_k$ in \eqref{V-k}.
Optimal orders are achieved for both elements and in both $L^2$ and $H^1$ norms.
Here we use $\P u$ instead of $u$ to check the error so that we can detect 
  possible superconvergence.
At the bottom of Table \ref{t-1},  we test a method of \cite{Berrone} where the
  interpolation space is $P_2\cup H_3$, i.e., enriching the $P_2(K)$ space by
  two harmonic $P_3$ polynomials.
The method is not proved yet.  The numerical test shows it works well, producing
  optimal order errors.

\begin{table}[ht]
  \centering  \renewcommand{\arraystretch}{1.2}
  \caption{Error profile on Figure \ref{grid-8} meshes  for  \eqref{s-1}. }
  \label{t-1}
\begin{tabular}{c|cc|cc}
\hline
Grid &   $\|\Pi^\nabla_h u-u_h\|_{0}$  &  $O(h^r)$ 
   &   $|\Pi^\nabla_h u-u_h|_{1}$ & $O(h^r)$ 
  \\ \hline 
    &  \multicolumn{4}{c}{ By the $P_1$ virtual element with HTC interpolation. } \\
\hline  
 7&   0.8208E-04 & 2.00&   0.7621E-02 & 1.00 \\
 8&   0.2052E-04 & 2.00&   0.3817E-02 & 1.00 \\
 9&   0.5129E-05 & 2.00&   0.1911E-02 & 1.00 \\
\hline
    &  \multicolumn{4}{c}{ By the $P_2$ virtual element with HTC interpolation. } \\
\hline  
 7&   0.7537E-07 & 3.00&   0.6668E-04 & 1.99 \\
 8&   0.9423E-08 & 3.00&   0.1670E-04 & 2.00 \\
 9&   0.1178E-08 & 3.00&   0.4179E-05 & 2.00 \\
\hline  
 &  \multicolumn{4}{c}{ By the $P_2$ virtual element with 2 h.p. \cite{Berrone}. }  \\
\hline  
 7&   0.8627E-07 & 3.00&   0.7861E-04 & 2.00 \\
 8&   0.1079E-07 & 3.00&   0.1968E-04 & 2.00 \\
 9&   0.1350E-08 & 3.00&   0.4925E-05 & 2.00 \\ 
\hline
    \end{tabular}%
\end{table}%

In Table \ref{t-2}, we first list the errors and the computed order of convergence
  for the $P_3$ stabilizer-free virtual elements, $k=3$ in \eqref{V-h} with the
   Hsieh-Clough-Tocher macro-triangle interpolation space $\mathbb{V}_k$ in \eqref{V-k}.
Optimal orders are achieved for the element in both $L^2$ and $H^1$ norms.
At the bottom of Table \ref{t-2},  we test a method of \cite{Berrone} where the
  interpolation space is $P_3\cup H_5$, i.e., enriching the $P_3(K)$ polynomial space by
  four harmonic $P_4$ and $P_5$ polynomials.
The method is not proved yet.  The numerical test shows it works well, producing
  optimal order errors.

\begin{table}[ht]
  \centering  \renewcommand{\arraystretch}{1.2}
  \caption{Error profile on Figure \ref{grid-8} meshes  for  \eqref{s-1}. }
  \label{t-2}
\begin{tabular}{c|cc|cc}
\hline
Grid &   $\|\Pi^\nabla_h u-u_h\|_{0}$  &  $O(h^r)$ 
   &   $|\Pi^\nabla_h u-u_h|_{1}$ & $O(h^r)$ 
  \\ \hline 
    &  \multicolumn{4}{c}{ By the $P_3$ virtual element with HTC interpolation. } \\
\hline  
 6&   0.4664E-08 & 4.00&   0.3068E-05 & 2.99 \\
 7&   0.2916E-09 & 4.00&   0.3844E-06 & 3.00 \\
 8&   0.1824E-10 & 4.00&   0.4811E-07 & 3.00 \\\hline 
 &  \multicolumn{4}{c}{ By the $P_3$ virtual element with 8 h.p. \cite{Berrone}. }  \\
\hline  
 6&   0.5676E-08 & 4.01&   0.3392E-05 & 2.99 \\
 7&   0.3542E-09 & 4.00&   0.4257E-06 & 2.99 \\
 8&   0.2222E-10 & 3.99&   0.5372E-07 & 2.99 \\
\hline  
    \end{tabular}%
\end{table}%

In Table \ref{t-3}, we first list the errors and the computed orders of convergence
  for the $P_4$ stabilizer-free virtual elements, $k=4$ in \eqref{V-h} with the
   Hsieh-Clough-Tocher macro-triangle interpolation space $\mathbb{V}_k$ in \eqref{V-k}.
Optimal orders are achieved for the element in both $L^2$ and $H^1$ norms.
Here the computer accuracy is exhausted when computing the last grid solution.
At the bottom of Table \ref{t-3},  we test a method of \cite{Berrone} where the
  interpolation space is $P_4\cup H_{10}$, i.e., enriching the $P_4(K)$
   polynomial space by
  12 harmonic $P_5$, $P-6$, $P_7$, $P_8$, $P_9$ and $P_{10}$ polynomials.
Since the method does not work,  we tested by adding more harmonic polynomials 
  until  the error can not be reduced anymore.

\begin{table}[ht]
  \centering  \renewcommand{\arraystretch}{1.2}
  \caption{Error profile on Figure \ref{grid-8} meshes  for  \eqref{s-1}. }
  \label{t-3}
\begin{tabular}{c|cc|cc}
\hline
Grid &   $\|\Pi^\nabla_h u-u_h\|_{0}$  &  $O(h^r)$ 
   &   $|\Pi^\nabla_h u-u_h|_{1}$ & $O(h^r)$ 
  \\ \hline  
    &  \multicolumn{4}{c}{ By the $P_4$ virtual element with HTC interpolation. } \\
\hline  
 5&   0.7612E-09 & 5.00&   0.3811E-06 & 3.99 \\
 6&   0.2374E-10 & 5.00&   0.2389E-07 & 4.00 \\
 7&   0.7616E-12 & --- &   0.1495E-08 & 4.00 \\
\hline   
 &  \multicolumn{4}{c}{ By the $P_4$ virtual element with 12 h.p. \cite{Berrone}. }  \\
\hline  
 4&   0.4589E-07 & 5.01&   0.7336E-05 & 3.94 \\
 5&   0.1525E-08 & 4.91&   0.5830E-06 & 3.65 \\
 6&   0.9493E-10 & 4.01&   0.9239E-07 & 2.66 \\
\hline
    \end{tabular}%
\end{table}%

In Table \ref{t-4}, we first list the errors and the computed orders of convergence
  for the $P_5$ stabilizer-free virtual elements, $k=5$ in \eqref{V-h} with the
   Hsieh-Clough-Tocher macro-triangle interpolation space $\mathbb{V}_k$ in \eqref{V-k}.
Optimal orders are achieved for the element in both $L^2$ and $H^1$ norms.
At the middle of Table \ref{t-4},  we test a method of \cite{Berrone} where the
  interpolation space is $P_5\cup H_{10}$, i.e., enriching the $P_5(K)$
   polynomial space by
  10 harmonic   $P_6$, $P_7$, $P_8$, $P_9$ and $P_{10}$ polynomials.
Since the method does not work,  we tested by adding more harmonic polynomials 
  until  the error can not be reduced anymore.
Comparing to the last case,  the convergent order deteriorates a lot.
At the bottom of Table \ref{t-4}, we 
  list the errors and the computed orders of convergence
  for the $P_6$ stabilizer-free virtual elements, $k=6$ in \eqref{V-h} with the
   Hsieh-Clough-Tocher macro-triangle interpolation space $\mathbb{V}_k$ in \eqref{V-k}.
Optimal orders are achieved for the element in both $L^2$ and $H^1$ norms.

\begin{table}[ht]
  \centering  \renewcommand{\arraystretch}{1.2}
  \caption{Error profile on Figure \ref{grid-8} meshes  for  \eqref{s-1}. }
  \label{t-4}
\begin{tabular}{c|cc|cc}
\hline
Grid &   $\|\Pi^\nabla_h u-u_h\|_{0}$  &  $O(h^r)$ 
   &   $|\Pi^\nabla_h u-u_h|_{1}$ & $O(h^r)$ 
  \\ \hline 
    &  \multicolumn{4}{c}{ By the $P_5$ virtual element with HTC interpolation. } \\
\hline  
 3&   0.3339E-07 & 6.01&   0.5275E-05 & 4.96 \\
 4&   0.5170E-09 & 6.01&   0.1665E-06 & 4.99 \\
 5&   0.8033E-11 & 6.01&   0.5223E-08 & 4.99 \\\hline 
 &  \multicolumn{4}{c}{ By the $P_5$ virtual element with 10 h.p. \cite{Berrone}. }  \\
\hline  
 2&   0.4262E-05 & 5.18&   0.2158E-03 & 4.24 \\
 3&   0.1566E-06 & 4.77&   0.2319E-04 & 3.22 \\
 4&   0.1819E-07 & 3.11&   0.5702E-05 & 2.02 \\
\hline 
    &  \multicolumn{4}{c}{ By the $P_6$ virtual element with HTC interpolation. } \\
\hline  
 2&   0.1696E-06 & 7.30&   0.1576E-04 & 6.19 \\
 3&   0.1320E-08 & 7.01&   0.2515E-06 & 5.97 \\
 4&   0.1025E-10 & 7.01&   0.3956E-08 & 5.99 \\
\hline   
    \end{tabular}%
\end{table}%

The next part of computation is done on the uniform triangular meshes, shown
  as in Figure \ref{t-grid}.  This is mainly for detecting possible superconvergence.

     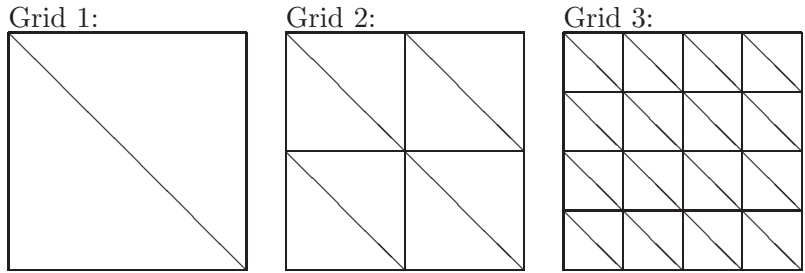
\begin{figure}[ht] \setlength\unitlength{1pt}\begin{center}
    \begin{picture}(300,102)(0,0)
     \def\mc{\begin{picture}(90,90)(0,0)
       \put(0,0){\line(1,0){90}} \put(0,90){\line(1,0){90}}
      \put(0,0){\line(0,1){90}}  \put(90,0){\line(0,1){90}} 
     \put(0,90){\line(1,-1){90}}
      \end{picture}}

\put(0,92){Grid 1:} \put(105,92){Grid 2:}\put(210,92){Grid 3:}
    \put(0,0){\mc}
      \put(105,0){\setlength\unitlength{0.5pt}\begin{picture}(90,90)(0,0)
    \put(0,0){\mc}\put(90,0){\mc}\put(0,90){\mc}\put(90,90){\mc}\end{picture}}
      \put(210,0){\setlength\unitlength{0.25pt}\begin{picture}(90,90)(0,0)
    \multiput(0,0)(90,0){4}{\multiput(0,0)(0,90){4}{\mc}} \end{picture}}
    \end{picture}
 \caption{The first three levels of grids for the computation in Tables \ref{t-21}--\ref{t-23}. }\label{t-grid} 
    \end{center} \end{figure}

In Table \ref{t-21}, we first list the errors and the computed orders of convergence
  for the $P_1$ stabilizer-free virtual elements, $k=1$ in \eqref{V-h} with the
   Hsieh-Clough-Tocher macro-triangle interpolation space $\mathbb{V}_k$ in \eqref{V-k}.
Optimal orders are achieved for the element in both $L^2$ and $H^1$ norms.
In fact, we have one-order superconvergence in $H^1$ semi-norm.
At the bottom of Table \ref{t-21}, we 
  list the errors and the computed orders of convergence
  for the $P_1$ standard virtual elements, $k=1$ in \eqref{V-h}.
Optimal orders are achieved for the element in both $L^2$ and $H^1$ norms.
Again, we have one-order $H^1$ superconvergence for this element.
Comparing the errors,  the new method is slightly better which is understandable as
  their interpolation space $P_1(K)$ is a subspace of our
   interpolation space $\mathbb{V}_1=C(K)\cap \cup_{i=1}^3 P_1(K_i)$.

\begin{table}[ht]
  \centering  \renewcommand{\arraystretch}{1.2}
  \caption{Error profile on Figure \ref{t-grid} meshes  for  \eqref{s-1}. }
  \label{t-21}
\begin{tabular}{c|cc|cc}
\hline
Grid &   $\|\Pi^\nabla_h u-u_h\|_{0}$  &  $O(h^r)$ 
   &   $|\Pi^\nabla_h u-u_h|_{1}$ & $O(h^r)$ 
  \\ \hline 
    &  \multicolumn{4}{c}{ By the $P_1$ SF virtual element with HTC interpolation. } \\
\hline  
 7&   0.3032E-03 & 2.00&   0.1382E-02 & 2.00 \\
 8&   0.7586E-04 & 2.00&   0.3457E-03 & 2.00 \\
 9&   0.1897E-04 & 2.00&   0.8644E-04 & 2.00 \\
 \hline 
&  \multicolumn{4}{c}{ By the standard $P_1$ virtual element \cite{Beirao}. }  \\
\hline   
 6&   0.1210E-02 & 1.98&   0.5518E-02 & 1.99 \\
 7&   0.3032E-03 & 2.00&   0.1382E-02 & 2.00 \\
 8&   0.7586E-04 & 2.00&   0.3457E-03 & 2.00 \\
\hline  
    \end{tabular}%
\end{table}%

In Table \ref{t-22}, we first list the errors and the computed orders of convergence
  for the $P_2$ stabilizer-free virtual elements, $k=2$ in \eqref{V-h} with the
   Hsieh-Clough-Tocher macro-triangle interpolation space $\mathbb{V}_k$ in \eqref{V-k}.
Optimal orders are achieved for the element in both $L^2$ and $H^1$ norms.
Unlike the traditional $P_2$ finite element, we do not have any superconvergence.
Comparing to the traditional $P_2$ finite element,  we compute a solution in a larger
  vector space but get a worse result.  This is because the added Hsieh-Clough-Tocher
  macro-bubbles destroy the symmetry of $P_2$ finite element equations on uniform 
   triangular meshes.  
At the bottom of Table \ref{t-22}, we 
  list the errors and the computed orders of convergence
  for the standard $P_2$  virtual elements, $k=2$ in \eqref{V-h}.
Optimal orders are achieved for the element in both $L^2$ and $H^1$ norms.
Comparing the two errors,  the new method is much better which is understandable as
  there is no stabilizer here.

\begin{table}[ht]
  \centering  \renewcommand{\arraystretch}{1.2}
  \caption{Error profile on Figure \ref{t-grid} meshes  for  \eqref{s-1}. }
  \label{t-22}
\begin{tabular}{c|cc|cc}
\hline
Grid &   $\|\Pi^\nabla_h u-u_h\|_{0}$  &  $O(h^r)$ 
   &   $|\Pi^\nabla_h u-u_h|_{1}$ & $O(h^r)$ 
  \\ \hline 
    &  \multicolumn{4}{c}{ By the $P_2$ SF virtual element with HTC interpolation. } \\
\hline  
 7&   0.5604E-07 & 3.72&   0.2302E-04 & 2.48 \\
 8&   0.5261E-08 & 3.41&   0.4990E-05 & 2.21 \\
 9&   0.5901E-09 & 3.16&   0.1194E-05 & 2.06 \\
\hline   
 &  \multicolumn{4}{c}{ By the standard $P_2$ virtual element  \cite{Beirao}. }  \\
\hline   
 6&   0.1885E-05 & 3.15&   0.3225E-03 & 2.08 \\
 7&   0.2285E-06 & 3.04&   0.7958E-04 & 2.02 \\
 8&   0.2833E-07 & 3.01&   0.1983E-04 & 2.00 \\
\hline 
    \end{tabular}%
\end{table}%

In Table \ref{t-23}, we list the errors and the computed orders of convergence
  for the $P_3$, $P_4$, $P_5$ and $P_6$   
     stabilizer-free virtual elements, $k=3, 4, 5, $ or $6$ in \eqref{V-h} with the
   Hsieh-Clough-Tocher macro-triangle interpolation space $\mathbb{V}_k$ in \eqref{V-k}.
Optimal orders are achieved for all the elements in both $L^2$ and $H^1$ norms.
Comparing the errors of same virtual elements, 
    the uniform triangular meshes are much better than the triangular
   meshes shown in Figure \ref{grid-8}.

\begin{table}[ht]
  \centering  \renewcommand{\arraystretch}{1.2}
  \caption{Error profile on Figure \ref{grid-8} meshes  for  \eqref{s-1}. }
  \label{t-23}
\begin{tabular}{c|cc|cc}
\hline
Grid &   $\|\Pi^\nabla_h u-u_h\|_{0}$  &  $O(h^r)$ 
   &   $|\Pi^\nabla_h u-u_h|_{1}$ & $O(h^r)$ 
  \\ \hline 
    &  \multicolumn{4}{c}{ By the $P_3$ SF virtual element with HTC interpolation. } \\
\hline  
 6&   0.4906E-07 & 3.98&   0.1628E-04 & 2.98 \\
 7&   0.3087E-08 & 3.99&   0.2048E-05 & 2.99 \\
 8&   0.1935E-09 & 4.00&   0.2567E-06 & 3.00 \\
 \hline 
    &  \multicolumn{4}{c}{ By the $P_4$ SF virtual element with HTC interpolation. } \\
\hline  
 5&   0.1038E-07 & 5.03&   0.3216E-05 & 4.00 \\
 6&   0.3216E-09 & 5.01&   0.2011E-06 & 4.00 \\
 7&   0.1001E-10 & 5.01&   0.1258E-07 & 4.00 \\
 \hline  
    &  \multicolumn{4}{c}{ By the $P_5$ SF virtual element with HTC interpolation. } \\
\hline  
 5&   0.2035E-09 & 6.00&   0.7749E-07 & 4.98 \\
 6&   0.3173E-11 & 6.00&   0.2433E-08 & 4.99 \\
 7&   0.5018E-13 & 5.98&   0.7612E-10 & 5.00 \\
\hline   
    &  \multicolumn{4}{c}{ By the $P_6$ SF virtual element with HTC interpolation. } \\
\hline  
 3&   0.5536E-07 & 6.99&   0.5949E-05 & 5.94 \\
 4&   0.4294E-09 & 7.01&   0.9376E-07 & 5.99 \\
 5&   0.3333E-11 & 7.01&   0.1468E-08 & 6.00 \\
\hline    
    \end{tabular}%
\end{table}%

We would compare more the stabilizer-free virtual element 
with the standard virtual elements of   
  \cite{Beirao}.
The standard $H^1$ interpolation is defined by $\Pi_h^1 \tilde u\in P_k(K)$ such that
\an{\label{s-i} \ad{ (\nabla\Pi_h^1 \tilde u, \nabla p)_K &=-(\tilde u, \Delta p)_K
            +\langle \tilde u, \nabla p\cdot \b n \rangle_{\partial K} 
               \quad \forall p\in P_k(K)\setminus P_0(K), \\
              \langle \Pi_h^1 \tilde u, p \rangle_{\partial K} 
          &=\langle \tilde u, p \rangle_{\partial K} \quad \forall p\in P_0(K), } }
where $\b n$ is the unit outer normal vector. $\tilde u$ is defined by the degrees of
   freedom on triangle $K=\b x_1\b x_2\b x_3$, $\{F_i, i=1,\dots, N_K\}$, cf. \cite{Beirao},
\an{\label{s-d} F_i(\tilde u)=\begin{cases} 
    \tilde u(\b x_j), &j=1,2,3, \\
    \tilde u( \frac{ l\b x_j+(k-l)\b x_{\t{\tiny mod}(j,3)+1} }k), &l=1,\dots,k-1; \ j=1,2,3, \\
    \frac{\int_K \tilde u (x-x_0)^j(y-y_0)^l d\b x}
         {\int_K  1  d\b x},  & 0\le j+l \le k-2, \end{cases} }
 where $\b x_0=( x_0, y_0)$ is the barycenter of $K$.   The standard stabilizer 
in \cite{Beirao} is defined as
\an{\label{s-s} S(\tilde u-\Pi^1_h \tilde u,\tilde v-\Pi^1_h \tilde v)_K
    =\sum_{i=1}^{N_K} F_i(\tilde u-\Pi^1_h \tilde u)F_i(\tilde v-\Pi^1_h \tilde v).  }

In Table \ref{t-n-1}, we compute the solution \eqref{s-1} again by the HTC-interpolating VM
   method and by the standard virtual element method \cite{Beirao} defined by 
   \eqref{s-i}, \eqref{s-d} and \eqref{s-s}.
We first read the condition number $\kappa_2(A)$ in $l^2$ norm for the stiffness matrix $A$
   in Table \ref{t-n-1}.
We can see the condition number is huge for the \cite{Beirao} $P_3$ VM, 
    with \eqref{s-d} and \eqref{s-s}.
It would make a direct solver fail on higher level meshes.
When we read the stiffness matrix of the \cite{Beirao} $P_3$ VM,
  we find the term for the basis function associate with $(x-x_0)^1$-moment is
   much bigger than that with $(x-x_0)^0$-moment.
Therefore we replace the degrees of freedom \eqref{s-d} by a better scaled set,
\an{\label{s-d-1} F_i(\tilde u)=\begin{cases} 
    \tilde u(\b x_j), &j=1,2,3, \\
    \tilde u( \frac{ l\b x_j+(k-l)\b x_{\t{\tiny mod}(j,3)+1} }k), &l=1,\dots,k-1; \ j=1,2,3, \\
    \frac{\int_K \tilde u (x-x_0)^j(y-y_0)^l d\b x}
         {(\int_K  [(x-x_0)^j(y-y_0)^l]^2  d\b x)^{1/2}},  & 0\le j+l \le k-2. \end{cases} }
The condition number of the \cite{Beirao} $P_3$ VM (with \eqref{s-d-1} and \eqref{s-s}) is
   improved, see the third part of Table \ref{t-n-1}. 
When we read the new stiffness matrix of the \cite{Beirao} $P_3$ VM,
  we find the term for the basis function associate with $(x-x_0)^0$-moment is
   much bigger than that with the degree of freedom $\tilde u(\b x_1)$.
Therefore we scale the degrees of freedom \eqref{s-d} again,
\an{\label{s-d-2} F_i(\tilde u)=\begin{cases} 
    \tilde u(\b x_j), &j=1,2,3, \\
    \tilde u( \frac{ l\b x_j+(k-l)\b x_{\t{\tiny mod}(j,3)+1} }k), &l=1,\dots,k-1; \ j=1,2,3, \\
    \frac{10 \int_K \tilde u (x-x_0)^j(y-y_0)^l d\b x}
         {(\int_K  [(x-x_0)^j(y-y_0)^l]^2  d\b x)^{1/2} },  & 0\le j+l \le k-2. \end{cases} }
The condition number is reduced again, seen in the fourth part of Table \ref{t-n-1}.
This is nearly the best we can do about the conditioning.
It is better only at the first level than the new virtual element's condition number.
Supposedly, changing the basis does not change the solution.
But here the error of the solution in the fourth part of Table \ref{t-n-1}
  is changed (smaller).
It indicates that the bad condition number of the \cite{Beirao} $P_3$ VM does 
   increase round-off errors.

\begin{table}[ht]
  \centering  \renewcommand{\arraystretch}{1.2}
  \caption{Error profile on Figure \ref{grid-8} meshes  for  \eqref{s-1}. }
  \label{t-n-1}
\begin{tabular}{c|cc|cc|c}
\hline
Grid &   $\|\Pi^\nabla_h u-u_h\|_{0}$  &  $O(h^r)$ 
   &   $|\Pi^\nabla_h u-u_h|_{1}$ & $O(h^r)$ & $\kappa_2(A)$
  \\ \hline 
    &  \multicolumn{5}{c}{ By the $P_3$ virtual element with HTC interpolation. } \\
\hline  
 1&   0.3607E-02 & 0.00&   0.6845E-01 & 0.00&   0.6783E+03 \\
 2&   0.2967E-03 & 3.60&   0.1129E-01 & 2.60&   0.7205E+03 \\
 3&   0.1890E-04 & 3.97&   0.1510E-02 & 2.90&   0.7273E+03 \\
 4&   0.1190E-05 & 3.99&   0.1934E-03 & 2.96&   0.1204E+04 \\
 5&   0.7456E-07 & 4.00&   0.2443E-04 & 2.98&   0.4812E+04 \\
  \hline 
    &  \multicolumn{5}{c}{ By the \cite{Beirao} $P_3$ VM, with \eqref{s-d} and \eqref{s-s}. } \\ 
  \hline 
 1&   0.4093E-02 & 0.00&   0.7393E-01 & 0.00&   0.7286E+05 \\
 2&   0.4283E-03 & 3.26&   0.1361E-01 & 2.44&   0.3420E+06 \\
 3&   0.3187E-04 & 3.75&   0.2224E-02 & 2.61&   0.1353E+07 \\
 4&   0.3037E-05 & 3.39&   0.4359E-03 & 2.35&   0.5400E+07 \\
\hline   
   &  \multicolumn{5}{c}{ By the \cite{Beirao} $P_3$ VM, with \eqref{s-d-1} and \eqref{s-s}. } \\ 
  \hline  
 1&   0.4093E-02 & 0.00&   0.7393E-01 & 0.00&   0.6325E+04 \\
 2&   0.4283E-03 & 3.26&   0.1361E-01 & 2.44&   0.2749E+05 \\
 3&   0.3187E-04 & 3.75&   0.2224E-02 & 2.61&   0.1070E+06 \\
 4&   0.3037E-05 & 3.39&   0.4359E-03 & 2.35&   0.4253E+06 \\  
 \hline   
   &  \multicolumn{5}{c}{ By the \cite{Beirao} $P_3$ VM, with \eqref{s-d-2} and \eqref{s-s}. } \\ 
 \hline  
 1&   0.4089E-02 & 0.00&   0.7384E-01 & 0.00&   0.3581E+03 \\  
 2&   0.4278E-03 & 3.26&   0.1359E-01 & 2.44&   0.1530E+04 \\  
 3&   0.3176E-04 & 3.75&   0.2216E-02 & 2.62&   0.6073E+04 \\  
 4&   0.3018E-05 & 3.40&   0.4332E-03 & 2.35&   0.2424E+05 \\  
 \hline
   \end{tabular}%
\end{table}%

The \cite{Beirao} $P_3$ virtual element solution does not converge at the correct order 
   in Table \ref{t-n-1}.  This problem does not happen to the $P_1$ and $P_2$ VM solutions,
  cf. Tables \ref{t-21} and \ref{t-22}.
Thus we increase the power of the stabilizer $S(\cdot, \cdot)_K$ in \eqref{s-s} by a scaling,
\an{\label{s-s-1} S(\tilde u-\Pi^1_h u,\tilde v-\Pi^1_h v)_K
    =h^{\alpha} \sum_{i=1}^{N_K} F_i(\tilde u-\Pi^1_h u)F_i(\tilde v-\Pi^1_h v),  }
where $\alpha$ is to be specified, depending on the polynomial degree $k$.
  In Table \ref{t-n-2}, the error and the computed order of convergence are listed
   for the \cite{Beirao} $P_3$ VM, with stabilizer's $\alpha=0$ and $-1$ in \eqref{s-s-1}.
We can see, for the latter, the method can converge at the optimal order.
The errors of the stabilizer-free VM are slightly smaller, see Table \ref{t-2}.  

\begin{table}[ht]
  \centering  \renewcommand{\arraystretch}{1.2}
  \caption{Error profile on Figure \ref{grid-8} meshes  for  \eqref{s-1}. }
  \label{t-n-2}
\begin{tabular}{c|cc|cc}
\hline
Grid &   $\|\Pi^\nabla_h u-u_h\|_{0}$  &  $O(h^r)$ 
   &   $|\Pi^\nabla_h u-u_h|_{1}$ & $O(h^r)$  
  \\ \hline   
   &  \multicolumn{4}{c}{ By the \cite{Beirao} $P_3$ VM, with \eqref{s-d-2} and \eqref{s-s-1},
         $\alpha=0$. } \\ 
 \hline  
 1&   0.4089E-02 & 0.00&   0.7384E-01 & 0.00 \\
 2&   0.4278E-03 & 3.26&   0.1359E-01 & 2.44 \\
 3&   0.3176E-04 & 3.75&   0.2216E-02 & 2.62 \\
 4&   0.3018E-05 & 3.40&   0.4332E-03 & 2.35 \\ 
 \hline  
   &  \multicolumn{4}{c}{ By the \cite{Beirao} $P_3$ VM, with \eqref{s-d-2} and \eqref{s-s-1},
         $\alpha=-1$. } \\ 
 \hline  
 1&   0.4093E-02 & 0.00&   0.7393E-01 & 0.00 \\
 2&   0.4069E-03 & 3.33&   0.1261E-01 & 2.55 \\
 3&   0.2529E-04 & 4.01&   0.1681E-02 & 2.91 \\
 4&   0.1583E-05 & 4.00&   0.2172E-03 & 2.95 \\
 5&   0.9949E-07 & 3.99&   0.2765E-04 & 2.97 \\
 6&   0.6245E-08 & 3.99&   0.3491E-05 & 2.99 \\
 \hline
   \end{tabular}%
\end{table}%

 In Table \ref{t-n-3}, the error and the computed order of convergence are listed
   for the \cite{Beirao} $P_4$ VM, with stabilizer's $\alpha=0$, $-1$ and $-2$ in \eqref{s-s-1}.
We can see that the standard stabilizer \eqref{s-s} does not work when $k=4$.
We can see, for the last $\alpha=-2$ (depending on polynomial degree $k=4$ here),
    the method does converge at the optimal order, in  Table \ref{t-n-3}.
The errors of the stabilizer-free VM are slightly smaller, see Table \ref{t-3}.

\begin{table}[ht]
  \centering  \renewcommand{\arraystretch}{1.2}
  \caption{Error profile on Figure \ref{grid-8} meshes  for  \eqref{s-1}. }
  \label{t-n-3}
\begin{tabular}{c|cc|cc}
\hline
Grid &   $\|\Pi^\nabla_h u-u_h\|_{0}$  &  $O(h^r)$ 
   &   $|\Pi^\nabla_h u-u_h|_{1}$ & $O(h^r)$  
  \\ \hline   
   &  \multicolumn{4}{c}{ By the \cite{Beirao} $P_4$ VM, with \eqref{s-d-2} and \eqref{s-s-1},
         $\alpha=0$. } \\ 
 \hline  
 1&   0.1655E-02 & 0.00&   0.3146E-01 & 0.00 \\
 2&   0.8753E-04 & 4.24&   0.3987E-02 & 2.98 \\
 3&   0.1025E-04 & 3.09&   0.9204E-03 & 2.12 \\
 4&   0.1298E-05 & 2.98&   0.2294E-03 & 2.00 \\
 \hline  
   &  \multicolumn{4}{c}{ By the \cite{Beirao} $P_4$ VM, with \eqref{s-d-2} and \eqref{s-s-1},
         $\alpha=-1$. } \\ 
 \hline  
 1&   0.1655E-02 & 0.00&   0.3146E-01 & 0.00 \\
 2&   0.6940E-04 & 4.58&   0.3162E-02 & 3.31 \\
 3&   0.5106E-05 & 3.76&   0.4866E-03 & 2.70 \\
 4&   0.3993E-06 & 3.68&   0.7665E-04 & 2.67 \\
 \hline
   &  \multicolumn{4}{c}{ By the \cite{Beirao} $P_4$ VM, with \eqref{s-d-2} and \eqref{s-s-1},
         $\alpha=-1$. } \\ 
 \hline  
 1&   0.1655E-02 & 0.00&   0.3146E-01 & 0.00 \\
 2&   0.5695E-04 & 4.86&   0.2511E-02 & 3.65 \\
 3&   0.2260E-05 & 4.66&   0.2115E-03 & 3.57 \\
 4&   0.7827E-07 & 4.85&   0.1489E-04 & 3.83 \\
 5&   0.2514E-08 & 4.96&   0.9577E-06 & 3.96 \\
 6&   0.7946E-10 & 4.98&   0.6012E-07 & 3.99 \\

 \hline
   \end{tabular}%
\end{table}%

\section{Ethical Statement}

\subsection{Compliance with Ethical Standards} { \ }

   The submitted work is original and is not published elsewhere in any form or language.

\subsection{Funding } { \ }

Xuejun Xu was supported by National Natural Science Foundation of China (Grant
No. 12071350), Shanghai Municipal Science and Technology Major Project No.
2021SHZDZX0100, and Science and Technology Commission of Shanghai Municipality.

\subsection{Conflict of Interest} { \ }

  There is no potential conflict of interest .

\subsection{Ethical approval} { \ }

  This article does not contain any studies involving animals.
This article does not contain any studies involving human participants.
  
\subsection{Informed consent}  { \ }

This research does not have any human participant.  

\subsection{Availability of supporting data } { \ }

This research does not use any external or author-collected data.

\subsection{Authors' contributions } { \ }

All authors made equal contribution.
  
\subsection{Acknowledgments } { \ }

  None.

\end{document}